%
%
\documentclass[a4paper]{article}
\usepackage[affil-it]{authblk}                    
%
%
\usepackage{graphicx}
\usepackage{cite}
\usepackage{amsfonts}
\usepackage{amsmath}
\usepackage{amsthm}
\usepackage[english]{babel}
\newtheorem{theorem}{Theorem}
\newtheorem{corollary}{Corollary}[theorem]

\newtheorem{definition}{Definition}[section]
\newtheorem{example}{Example}[section]
\newtheorem{lemma}{Lemma}[section]
\usepackage{graphicx}
\usepackage{cite}
\usepackage{amsfonts}
\usepackage{amsmath}
\usepackage{epstopdf}
\usepackage[english]{babel}
\usepackage{graphicx}
\usepackage{float}
\usepackage{mleftright}
%
%
%
%
%
\begin{document}

\title{Finding the set of global minimizers of a piecewise affine function
}


\author{Majid E. Abbasov         
}


\affil{
              St. Petersburg State University, SPbSU,\\ 7/9 Universitetskaya nab., St.
              Petersburg, 199034 Russia. \\

              Institute for Problems in Mechanical
              Engineering of the RAS\\
              61, Bolshoj pr. V.O., St. Petersburg,
199178\\
              {m.abbasov@spbu.ru, abbasov.majid@gmail.com}           
}


\maketitle

\begin{abstract}
In the present work we study a problem of finding a global minimum
of a piecewise affine function. We employ optimality conditions
for the problem in terms of coexhausters and use them to state and
prove necessary and sufficient conditions for a piecewise affine
function to be bounded from below. We construct a simple method
based on these conditions which allows one to get the minimum
value of a studied function and the corresponding set of all its
global minimizers. These results are built via coexhauster notion.
This notion was introduced by V.~F.~Demyanov.

Coexhausters are families of convex compact sets that allow one to
represent the approximation of the increment of the studied
function at a considered point in the form of minmax or maxmin of
affine functions. We take these representations as a definition of
a piecewise affine function and show that they correspond with the
definitions for piecewise affine function given by other
researchers. All the conditions and methods were obtained by means
of coexhausters theory. In the paper we give some necessary facts
from this theory.

A lot of illustrative numerical examples are provided throughout
the paper.

\end{abstract}

\section*{Introduction}
\label{abbasov_sec_intro}

Piecewise affine functions are important for different areas of
mathematics and its applications. For example they are used in DC
programming \cite{Gaudioso2017}, optimal control \cite{Dempe2019},
global optimization \cite{Mangasarian2005, Zhang2008},
approximation problems \cite{Misener2010,Silva2014} and so on. We
will consider piecewise affine functions from the coexhausters
theory point of view. Coexhausters are effective tool for the
study of nondifferentiable functions and are important objects in
so called constructive nonsmooth analysis
\cite{Demyanov-Pardalos2014}.

Local behavior of any smooth function at a point is provided by
the linear function which is described by means of the gradient.
Therefore optimality conditions for the smooth functions as well
as techniques for building directions of descent and ascent are
expressed via gradient. Obviously one need a different form of
approximation of the increment of a studied function to follow the
same idea in the nonsmooth case. One of the ways here is to
consider this approximation in the form of minmax or maxmin of
affine functions. Coexhausters are families of convex compact sets
that provide this type of representation for the approximation of
the increment. This notion was proposed by Demyanov in
\cite{Demyanov-dem00a,Demyanov-dem991}. The well-developed
calculus of coexhausters lets one to build these families for a
wide class of functions. Optimality conditions in terms of
coexhausters as well as technique for obtaining directions of
descent and ascent in cases when these conditions are not
satisfied were derived in \cite{Abbasov2013, Demyanov_dem2012,
Abbasov2019}. This led to the emergence of the  coexhausters based
optimization algorithms.

Thus piecewise affine functions are involved in the coexhausters
theory. Therefore, it is not surprising that exactly this theory
allowed us to obtain important new results for such functions.

The paper is organized as follows. In Section 1 we discuss
coexhausters notion and the definition of piecewise affine
function. In section 2 we describe optimality conditions in terms
of coexhausters.
The advantage of the optimality conditions of this form is
disclosed subsequently, as all the proposed results and methods
are based on these conditions. Since we consider a problem of
finding a global minimum, it is natural to deal with functions
which are bounded below. We derive the condition which is
equivalent to the lower boundedness. In Section 3 we describe a
simple technique for finding the minimum value of the function.
The method for obtaining the set of all global minimizers is
developed in Section 4. Throughout the paper we provide numerical
examples to illustrate how proposed results can be applied.

\section{Coexhausters and piecewise affine functions} \label{abbasov_sec1}

Let $X\subset{\mathbb{R}^{n}}$ be an open set and a function
$f\colon X\to \mathbb{R}$ be given. Let the function $f$ be
continuous at a point $x\in X$. We say that at the point $x$ the
function $f$ has an upper coexhauster if the following expansion
holds:
\begin{equation}\label{1904e3a}
f(x+\Delta)=f(x)+\min_{C\in{\overline{E}(x)}}\max_{[a,v]\in{C}}[a+\langle
v,\Delta\rangle]+o_{x}(\Delta)
\end{equation}
where $\overline{E}(x)$ is a family of convex compact sets in
${\mathbb{R}}^{n+1}$, and
\begin{equation}\label{1904e11a}
\lim_{\alpha\downarrow0}\frac{o_{x}(\alpha\Delta)}{\alpha}=0\quad
\forall\Delta\in{\mathbb{R}}^n.
\end{equation}

The set $\overline{E}(x)$ is called an upper coexhauster of $f$ at
the point $x$.

We say that at the point $x$ the function $f$ has a lower
coexhauster if the following expansion holds:
\begin{equation}\label{1904e3b}f(x+\Delta)=f(x)+\max_{C\in{\underline{E}(x)}}\min_{[b,w]\in{C}}[b+\langle w,\Delta\rangle]+o_{x}(\Delta),
\end{equation}
where $\underline{E}(x)$ is a family of convex compact sets in
${\mathbb{R}}^{n+1}$, and $o_{x}(\Delta)$ satisfies
(\ref{1904e11a}).

The set $\underline{E}(x)$ is called a lower coexhauster of the
function $f$ at the point $x$.

The function $f$ is  continuous, therefore from (\ref{1904e3a})
and (\ref{1904e3b}) (for $\Delta=0$) it follows that
 \begin{equation}\label{1904e3am}
\min_{C\in{\overline{E}(x)}}\max_{[a,v]\in{C}}a =
\max_{C\in{\underline{E}(x)}}\min_{[b,w]\in{C}}b = 0.
\end{equation}
 The notion of coexhauster was introduced in
 \cite{Demyanov-dem991,Demyanov-dem00a}. The calculus of
coexhausters that allows one to built these families for a wide
class of nonsmooth functions as well as optimality conditions in
terms of these objects were developed in
\cite{Abbasov2013,Demyanov_dem2012,Abbasov2019}.

Recall that if a set can be represented as the intersection of a
finite family of closed halfspaces it is called polyhedral
\cite{rock70}. A function $\phi\colon\mathbb{R}^n\to\mathbb{R}$ of
the form $\phi(\Delta)=a+\langle v,\Delta\rangle$, where $a\in
\mathbb{R}$, $v\in \mathbb{R}^n$ is called affine.

Now we can give a definition of a piecewise affine function (see
\cite{Eremin1998,Gorokhovik1994,Scholtes2012}).

\begin{definition} Let $\{M_j\mid j\in J\}$, $\{\phi_j(x)\mid j\in J\}$ be finite sets of polyhedral sets
and affine functions respectively, where $J=1,\dots,k$. We say
that this sets define a piecewise affine function $\Phi(x)$ on
$\mathbb{R}^n$ if the following conditions hold:

\begin{enumerate}
\item $\operatorname{int}M_j\neq\emptyset$ $\forall j\in J$,

\item $\displaystyle\bigcup_{j\in J} M_j=\mathbb{R}^n$,

\item $\operatorname{int}M_j \bigcap \operatorname{int}M_j=
\emptyset$ $i\neq j$,

\item $\Phi(x)=\phi_j(x)$ $\forall x\in M_j$, $\forall j\in J$.
\end{enumerate}
\end{definition}

Here $\operatorname{int}M$ means the interior of the set $M$.

As it was shown in \cite{Gorokhovik1994} the function $\Phi(x)$ is
piecewise affine iff it can be represented both in the forms
$$\Phi(x)=\min_{1\leq i\leq k_1}\max_{1\leq j\leq m_j}\phi_{ij}(x)$$
or
$$\Phi(x)=\max_{1\leq i\leq k_2}\min_{1\leq j\leq m_j}\phi_{ij}(x)$$
which are obviously equivalent to the representations
(\ref{1904e3a}) and (\ref{1904e3b}) when coexhausters consist of
finite numbers of convex polytopes. As we see in this case
coexhausters provide piecewise affine approximation for the
increment of the studied function at a considered point.

\section{Bounded below piecewise affine function} \label{abbasov_sec2}

Optimality conditions for the minimum most organically are stated
in terms of upper coexhauster. Therefore we deal with the
representation of a piecewise affine function of the form
$$h(x)=\min_{C\in{\overline{E}}}\max_{[a,v]\in{C}}[a+\langle
v,x\rangle],$$ where $\underline{E}$ is the family that contains
finite number of convex polytopes. If $h(0_n)=0$ (which is the
case (see \ref{1904e3am}) when $h$ is the coexhauster
approximation of increment of the studied function at a considered
point) then the following theorem gives us optimality condition
for the global minimum of the function $h$ at the origin.

\begin{theorem}\label{demyanov_opt_cond_main}
For the condition $h(x)\geq 0$ to hold for any $x\in\mathbb{R}^n$
it is necessary and sufficient that
$$C\bigcap L_0^+\neq\emptyset \quad \forall C\in\overline{E},$$
where $L_0^+=\left\{(a,0_n)\in\mathbb{R}^{n+1}\mid a\geq
0\right\}$.
\end{theorem}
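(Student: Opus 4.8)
The plan is to reduce the claim about $h$ to a claim about each polytope in $\overline{E}$ separately, and then to characterize, for a single convex polytope $C\subset\mathbb{R}^{n+1}$, when the convex piecewise affine function $h_C(x):=\max_{[a,v]\in C}[a+\langle v,x\rangle]$ is nonnegative on all of $\mathbb{R}^n$. For the reduction, note that $h(x)=\min_{C\in\overline{E}}h_C(x)$ and, since $\overline{E}$ is finite, $h(x)\ge0$ holds for every $x\in\mathbb{R}^n$ if and only if $h_C(x)\ge0$ holds for every $x\in\mathbb{R}^n$ and every $C\in\overline{E}$. Hence it suffices to prove: for a fixed convex polytope $C$, one has $h_C(x)\ge0$ for all $x\in\mathbb{R}^n$ if and only if $C\cap L_0^+\neq\emptyset$.

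The sufficiency direction is immediate: if $[a^*,0_n]\in C$ with $a^*\ge0$, then for any $x\in\mathbb{R}^n$ we get $h_C(x)\ge a^*+\langle 0_n,x\rangle=a^*\ge0$.

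For necessity I would argue by contraposition: assuming $C\cap L_0^+=\emptyset$, I would construct $x$ with $h_C(x)<0$. Since $C$ is convex and compact, $L_0^+$ is convex and closed, and the two sets are disjoint, the strict separation theorem yields $(\beta,y)\in\mathbb{R}\times\mathbb{R}^n$ with $(\beta,y)\neq0$ and a scalar $c$ such that $\beta a+\langle y,v\rangle<c<\beta a'$ for all $[a,v]\in C$ and all $a'\ge0$ (after possibly replacing $(\beta,y,c)$ by its negative). Taking $a'=0$ forces $c<0$, and then requiring $c<\beta a'$ for all $a'\ge0$ forces $\beta\ge0$. If $\beta>0$, dividing the left inequality by $\beta$ and using compactness of $C$ gives $h_C(y/\beta)\le c/\beta<0$. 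If $\beta=0$, then $y\neq0$ and $\langle y,v\rangle<c<0$ for all $[a,v]\in C$; writing $A:=\max_{[a,v]\in C}a$ and taking $x=ty$ with $t>0$, each term satisfies $a+t\langle v,y\rangle<A+tc$, which is negative once $t$ is large enough, so $h_C(ty)<0$. Either case contradicts $h_C\ge0$ on $\mathbb{R}^n$, which completes the argument.

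I expect the $\beta=0$ branch of the separation step to be the main obstacle: there the separating functional disregards the first coordinate of $\mathbb{R}^{n+1}$, so no single separating point can serve as the desired $x$; instead one must move off to infinity along the direction $y$, and it is precisely the boundedness of the first coordinate over the compact set $C$ that guarantees $h_C$ eventually becomes negative in that direction.
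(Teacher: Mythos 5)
Your argument is correct. The reduction is sound: since $\overline{E}$ is a finite family, $\min_{C\in\overline{E}}h_C(x)\geq 0$ for all $x$ is indeed equivalent to $h_C\geq 0$ on $\mathbb{R}^n$ for every $C$ separately, and for a single polytope the sufficiency is the one-line bound $h_C(x)\geq a^\ast\geq 0$. The necessity via strict separation of the compact convex set $C$ from the closed convex ray $L_0^+$ is carried out correctly, including the two delicate points: the sign analysis forcing $c<0$ and $\beta\geq 0$, and the degenerate case $\beta=0$, where you rightly observe that the separating direction $y$ only becomes a certificate of unboundedness below after scaling $x=ty$, $t\to\infty$, using compactness of $C$ to bound the first coordinate by $A$. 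For comparison: the paper does not prove this statement at all --- it is quoted as a known result from Demyanov's 2012 paper on proper exhausters and coexhausters, where the proof is of the same classical separation type. So your proposal supplies a complete, self-contained proof of the cited theorem rather than an alternative to an argument in this paper; it needs no assumption beyond finiteness of $\overline{E}$ and compactness (not even polyhedrality) of each $C$, which is slightly more general than the polytope setting in which the paper applies it.
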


This theorem was derived in \cite{Demyanov_dem2012}. Based on this
result we can formulate necessary and sufficient conditions for
the function $h$ to be bounded from below.

\begin{theorem}\label{abbasov_th1}
A function $h(x)$ is bounded from below iff it holds the condition
\begin{equation}\label{abbasov_th1_condition} C\bigcap L_0\neq\emptyset \quad \forall C\in\overline{E},\end{equation}
where $L_0=\left\{(a,0_n)\in\mathbb{R}^{n+1} \right\}$.
\end{theorem}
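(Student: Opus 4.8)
The plan is to reduce the statement to the optimality criterion of Theorem~\ref{demyanov_opt_cond_main} by translating the representing family $\overline{E}$ in the vertical direction. For an arbitrary constant $m\in\mathbb{R}$ one has
$$h(x)-m=\min_{C\in\overline{E}}\max_{[a,v]\in C}[(a-m)+\langle v,x\rangle]=\min_{C_m\in\overline{E}_m}\max_{[a,v]\in C_m}[a+\langle v,x\rangle],$$
where for $C\in\overline{E}$ we put $C_m=C-(m,0_n)=\{(a-m,v)\mid(a,v)\in C\}$ and $\overline{E}_m=\{C_m\mid C\in\overline{E}\}$. A translation preserves convexity, compactness and the polytope property, so $\overline{E}_m$ is again a finite family of convex polytopes and $h-m$ is a piecewise affine function of the admissible form. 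Hence $h$ is bounded from below if and only if there is some $m$ with $h(x)-m\geq0$ for all $x\in\mathbb{R}^n$, and by Theorem~\ref{demyanov_opt_cond_main} this holds exactly when $C_m\cap L_0^+\neq\emptyset$ for every $C_m\in\overline{E}_m$; translating back, this is the condition
$$C\cap\{(a,0_n)\in\mathbb{R}^{n+1}\mid a\geq m\}\neq\emptyset\qquad\forall C\in\overline{E}.$$
If $h$ is bounded below this condition holds for some $m$, and since $\{(a,0_n)\mid a\geq m\}\subset L_0$ the necessity of (\ref{abbasov_th1_condition}) follows at once.

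For the converse, assume (\ref{abbasov_th1_condition}) holds. For each $C\in\overline{E}$ the set $C\cap L_0$ is nonempty and compact, being the intersection of the compact set $C$ with the closed set $L_0$; therefore $\{a\in\mathbb{R}\mid(a,0_n)\in C\}$ is a nonempty compact subset of the real line and attains its least element, say $a_C$. Because the family $\overline{E}$ is finite, $m:=\min_{C\in\overline{E}}a_C$ is a well-defined real number, and for every $C$ we have $(a_C,0_n)\in C$ with $a_C\geq m$, so $C\cap\{(a,0_n)\mid a\geq m\}\neq\emptyset$. By the equivalence established above this gives $h(x)\geq m$ for all $x\in\mathbb{R}^n$, i.e. $h$ is bounded from below.

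The only step that really uses the structure of the problem is this last one: extracting from the separate nonemptiness conditions $C\cap L_0\neq\emptyset$ a single lower bound $m$ that works for all $C$ simultaneously. This is precisely what the compactness of the polytopes $C$ together with the finiteness of $\overline{E}$ guarantee; if $\overline{E}$ were allowed to be infinite the infimum of the numbers $a_C$ could be $-\infty$ and the equivalence would break. Everything else is a direct translation of Theorem~\ref{demyanov_opt_cond_main}.
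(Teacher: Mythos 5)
Your proof is correct and follows essentially the same route as the paper: both directions reduce the claim to Theorem~\ref{demyanov_opt_cond_main} by translating the family $\overline{E}$ vertically by a suitable constant. The only (immaterial) difference is the choice of that constant in the sufficiency part — you take $m=\min_{C\in\overline{E}}\min\{a\mid(a,0_n)\in C\}$, while the paper uses $a_\ast=\min_{C\in\overline{E}}\max_{[a,0_n]\in C}a$, a choice that additionally turns out (in Theorem~\ref{abbasov_th2}) to be the exact minimum value of $h$.
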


\begin{proof}
Let us start from necessity. Suppose that there exists $M$ such
that \mbox{$h(x)-M\geq 0$} for all $x\in\mathbb{R}^n$. This means
validity of the inequality
$$\min_{\widetilde{C}\in\widetilde{{E}}}\max_{[a,v]\in \widetilde{C}}[a+\langle v,x\rangle]\geq 0,$$
where $$\widetilde{{E}}=\left\{\widetilde{C}=C-(M,0_n)\mid
C\in\overline{E}\right\},\ C-(M,0_n)=\left\{(a-M,v)\mid (a,v)\in
C\right\}.$$ Theorem \ref{demyanov_opt_cond_main} implies that
condition $C\bigcap L_0^+\neq\emptyset$ holds for any
$\widetilde{C}\in\widetilde{E}$, whence
$$C\bigcap L_0\neq\emptyset \quad \forall C\in\overline{E}.$$

Now proceed to the sufficiency. Suppose that the condition
$C\bigcap L_0\neq\emptyset$ is true for all $C\in\overline{E}$.
Choose an arbitrary set $\widehat{C}$ from $\overline{E}$ and
denote $\displaystyle\widehat{a}=\max_{[a,0_n]\in \widehat{C}} a$,
$\displaystyle a_\ast=\min_{C\in{\overline{E}}}\max_{[a,0_n]\in
C}a$. 
The inequality
$$a_\ast=\min_{C\in{\overline{E}}}\max_{[a,0_n]\in C}a\leq \max_{[a,0_n]\in \widehat{C}} a=\widehat{a}$$
holds, therefore $(\widehat{a}-a_\ast,0_n)\in L_0^+$, whence it is
obvious that
\begin{equation}\label{abbasov_th1_suf_1}\left(\widehat{C}-(a_\ast,0)\right)\bigcap
L_0^+\neq\emptyset.\end{equation} Since $\widehat{C}$ was chosen
arbitrarily, condition (\ref{abbasov_th1_suf_1}) is valid for all
$C\in \overline{E}$
$$\left(C-(a_\ast,0)\right)\bigcap L_0^+\neq\emptyset \quad \forall C\in \overline{E}.$$
Theorem \ref{demyanov_opt_cond_main} implies the inequality
$$\min_{C\in \overline{E}}\max_{[a,v]\in C}[a-a_\ast+\langle v,x\rangle]\geq 0 \quad \forall
x\in\mathbb{R}^n,$$ which yields that $h(x)\geq a_\ast$ for all
$x\in\mathbb{R}^n$. \qed
\end{proof}

The following example demonstrates how Theorem \ref{abbasov_th1}
works.

\begin{example}\label{abbasov_exmpl1}
Let the function $h_1\colon\mathbb{R}\to\mathbb{R}$ is given by
$$h_1(x)=\min\left\{\max\left\{2+x,\frac{1}{2}x+1\right\}, \max\left\{-2+x,-x\right\}\right\}.$$
The family $\overline{E}=\{C_1,C_2\}$, where
$$C_1=\operatorname{conv}\left\{\begin{pmatrix}2 \\
1\end{pmatrix},\begin{pmatrix}\frac{1}{2} \\
1\end{pmatrix}\right\},\
C_2=\operatorname{conv}\left\{\begin{pmatrix}-2
\\ \phantom{-}1\end{pmatrix},\begin{pmatrix}\phantom{-}0 \\
-1\end{pmatrix}\right\},$$ is an upper coexhauster of the function
$h_1$. Since $C_1\bigcap L_0=\emptyset$ (see Fig.
\ref{abbasov_ex1_epsimage} a) condition
(\ref{abbasov_th1_condition}) is not fulfilled here. Thus $h_1$ is
not bounded below (see Fig. \ref{abbasov_ex1_epsimage} b).

\begin{figure}[H]
\begin{minipage}[h]{0.49\linewidth}
\center{\includegraphics[width=0.7\linewidth]{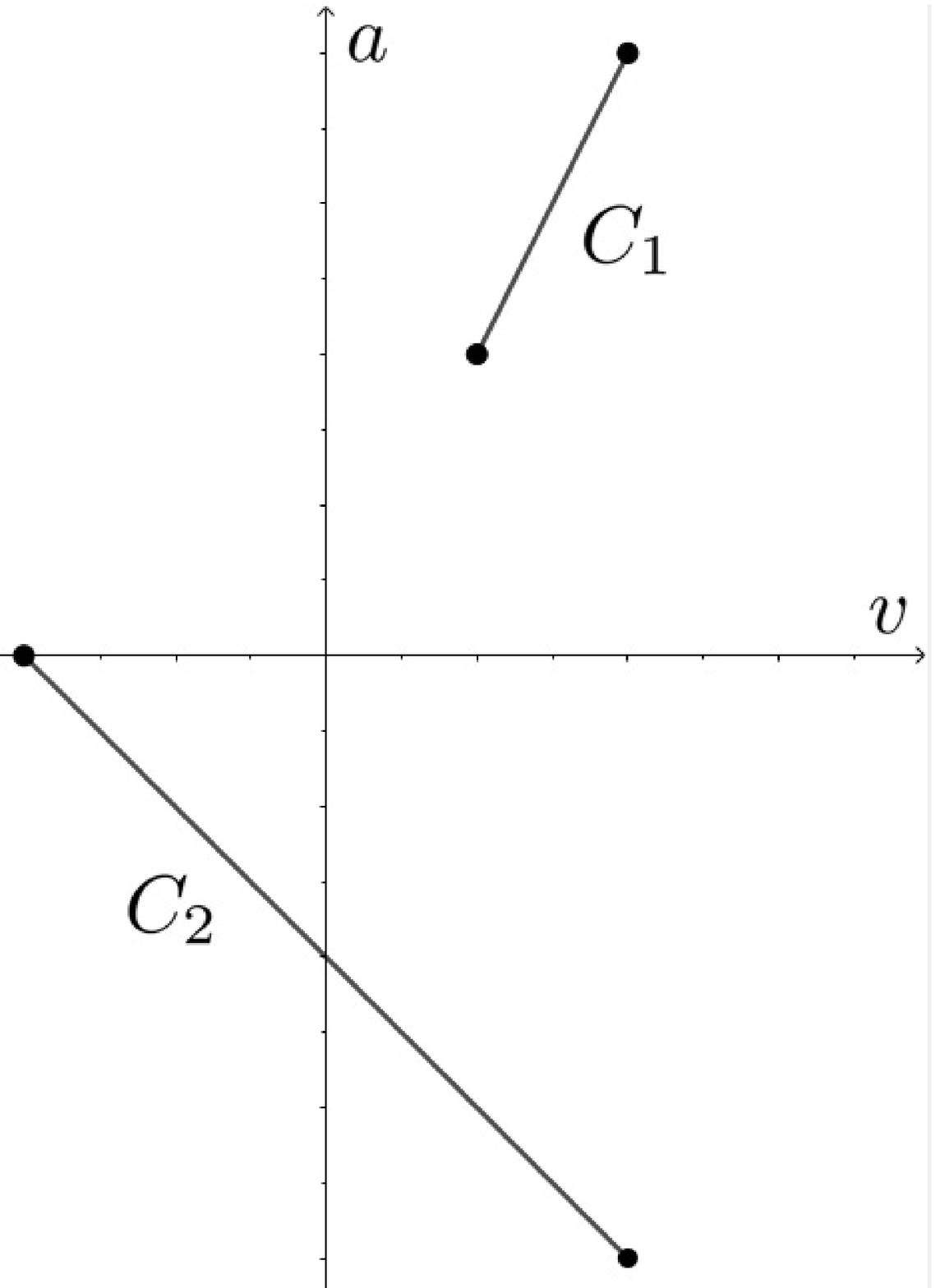} \\
a)}
\end{minipage}
\hfill
\begin{minipage}[h]{0.49\linewidth}
\center{\includegraphics[width=0.825\linewidth]{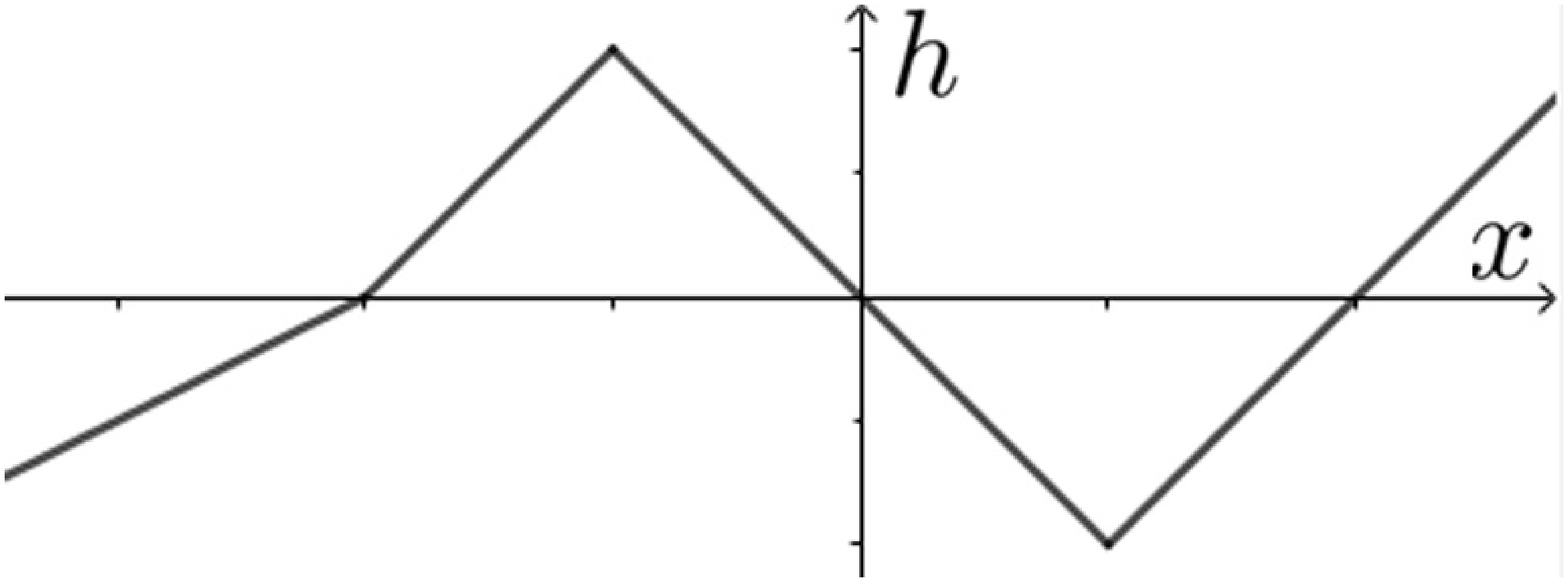} \\
b)}
\end{minipage}
\caption{The upper coexhauster and the graph of the function
$h_1$.} \label{abbasov_ex1_epsimage}
\end{figure}

\end{example}

\section{Finding the minimum value of a piecewise affine function} \label{abbasov_sec3}

On the basis of Theorems \ref{demyanov_opt_cond_main} and
\ref{abbasov_th1} we can derive a technique of finding the minimum
value of a piecewise affine function.

\begin{theorem}\label{abbasov_th2}
Let a piecewise affine function $h\colon
\mathbb{R}^n\to\mathbb{R}$ is given by
$$h(x)=\min_{C\in{\overline{E}}}\max_{[a,v]\in{C}}[a+\langle
v,x\rangle],$$ where $\overline{E}$ is a finite family of convex
polytopes. If there exists $C\in{\overline{E}}$ such that
$C\bigcap L_0=\emptyset$, then $h$ is not bounded from below on
$\mathbb{R}^n$. Otherwise
$$\displaystyle\min_{x\in\mathbb{R}^n}h(x)= a_\ast=\min_{C\in{\overline{E}}}\max_{[a,0_n]\in
C}a,$$ i.e. $a_\ast$ is the minimum value of the function $h$ on
$\mathbb{R}^n$.
\end{theorem}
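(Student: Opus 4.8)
My plan is to dispatch the two cases separately and, in the bounded case, to split the equality into a lower bound and the construction of a minimizer.

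For the first claim I would just invoke Theorem~\ref{abbasov_th1}: if $C\cap L_0=\emptyset$ for some $C\in\overline{E}$, condition~(\ref{abbasov_th1_condition}) fails, so $h$ is not bounded from below. For the rest I may therefore assume $C\cap L_0\neq\emptyset$ for every $C\in\overline{E}$; then each $\widehat a_C:=\max_{[a,0_n]\in C}a$ is a genuine maximum over a nonempty compact set, so $a_\ast$ is well defined and, moreover, the point $(\widehat a_C,0_n)$ lies in $C$ for every $C$.

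The inequality $h\ge a_\ast$ is essentially already contained in the sufficiency part of the proof of Theorem~\ref{abbasov_th1}: for every $C$ one has $\widehat a_C\ge a_\ast$, hence $(C-(a_\ast,0_n))\cap L_0^+\neq\emptyset$, and Theorem~\ref{demyanov_opt_cond_main} applied to the shifted family gives $\min_{C\in\overline{E}}\max_{[a,v]\in C}[a-a_\ast+\langle v,x\rangle]\ge 0$, i.e.\ $h(x)\ge a_\ast$ for all $x\in\mathbb{R}^n$. So the real content of the statement is to exhibit a point at which this bound is attained. To do so I would pick $\widehat C\in\overline{E}$ realizing the outer minimum, so that $\max_{[a,0_n]\in\widehat C}a=a_\ast$ and the point $\widehat p=(a_\ast,0_n)$ belongs to $\widehat C$. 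It then suffices to find $x^\ast\in\mathbb{R}^n$ with $a+\langle v,x^\ast\rangle\le a_\ast$ for all $[a,v]\in\widehat C$: that forces $h(x^\ast)\le\max_{[a,v]\in\widehat C}[a+\langle v,x^\ast\rangle]\le a_\ast$, which together with $h\ge a_\ast$ yields $h(x^\ast)=a_\ast=\min_{x\in\mathbb{R}^n}h(x)$. Finding such an $x^\ast$ amounts to producing a linear functional on $\mathbb{R}^{n+1}$ of the form $(a,v)\mapsto a+\langle v,x^\ast\rangle$, i.e.\ with coefficient $1$ on the $a$-axis, that attains its maximum over $\widehat C$ at $\widehat p$. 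The key point is that $a_\ast$ being maximal means the ray $\{\widehat p+\lambda(1,0_n):\lambda>0\}$ misses $\widehat C$, and in fact $(1,0_n)$ does not lie in the cone $K$ generated by $\widehat C-\widehat p$. Since $K$ is finitely generated, hence a closed convex polyhedral cone, $(1,0_n)$ can be separated from it: there is $y=(y_0,y')$ with $\langle y,w\rangle\le 0$ for all $w\in K$ and $\langle y,(1,0_n)\rangle=y_0>0$. Dividing by $y_0$ and setting $x^\ast=y'/y_0$ gives $\langle(1,x^\ast),z-\widehat p\rangle\le 0$ for all $z\in\widehat C$, which is precisely $a+\langle v,x^\ast\rangle\le a_\ast$ on $\widehat C$.

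The genuinely delicate step is this last one. The lower bound $h\ge a_\ast$ falls out of the coexhauster optimality condition with no effort, but Theorem~\ref{demyanov_opt_cond_main} by itself cannot distinguish an attained infimum from an unattained one, so attainment has to be extracted by a separate argument; this is where the boundedness of the polytopes in $\overline{E}$ is used in an essential way. The separation argument above is the cleanest route I see; an equivalent one is the fundamental theorem of linear programming applied to the problem of maximizing $a$ over $\widehat C$ subject to $v=0_n$, whose optimal dual multipliers again deliver the desired $x^\ast$.
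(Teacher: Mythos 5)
Your argument is correct, and its first two steps (unboundedness via Theorem~\ref{abbasov_th1}, and the lower bound $h(x)\geq a_\ast$ by re-running the sufficiency argument through Theorem~\ref{demyanov_opt_cond_main}) coincide with the paper's. Where you diverge is the attainment step. The paper fixes a set $C_\ast$ realizing $a_\ast$, takes a global minimizer $x_\ast$ (its existence is tacitly assumed rather than proved), and argues by contradiction: if $h(x_\ast)>a_\ast$, then applying Theorem~\ref{demyanov_opt_cond_main} to the nonnegative function $h(x)-h(x_\ast)$ forces $\bigl(C-(h(x_\ast),0)\bigr)\cap L_0^+\neq\emptyset$ for every $C$, while every $[a,0_n]\in C_\ast$ has $a-h(x_\ast)\leq a_\ast-h(x_\ast)<0$, so $C_\ast$ violates this condition. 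You instead construct a minimizer outright: since $(a_\ast,0_n)$ maximizes $a$ over $\widehat C\cap L_0$, the direction $(1,0_n)$ lies outside the closed polyhedral cone generated by $\widehat C-(a_\ast,0_n)$, and separating it yields $x^\ast$ with $a+\langle v,x^\ast\rangle\leq a_\ast$ on $\widehat C$, hence $h(x^\ast)=a_\ast$ (equivalently, one can read $x^\ast$ off the dual of the linear program (\ref{abbasov_linprog_problem})). Your route is slightly longer but buys something real: it proves that the infimum is attained, a point the paper's contradiction argument presupposes, and it anticipates the description of the minimizer set $X(C_\ast)$ in Section~4, since your $x^\ast$ is exactly a point of that polyhedron. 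The paper's route, by contrast, stays entirely within the coexhauster optimality conditions and needs no separation machinery beyond Theorem~\ref{demyanov_opt_cond_main}.
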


\begin{proof}
First part of the theorem is a straight corollary of Theorem
\ref{abbasov_th1}. Let us show that $a_\ast$ is the minimum value
of the function $h$ on $\mathbb{R}^n$. We will consider the case
$a_\ast\geq 0$ as the opposite case can be treated similarly.
Assume that
$$a_\ast=\min_{C\in{\overline{E}}}\max_{[a,0_n]\in
C}a=\max_{[a,0_n]\in C_\ast}a.$$ Repeating the same arguments as
in the proof of sufficiency of Theorem \ref{abbasov_th1}, we can
obtain the following chain of inequalities
$$h(x)\geq h(x_\ast)\geq a_\ast \quad \forall x\in\mathbb{R}^n.$$

To show that $h(x_\ast)=a_\ast$ we assume the contrary, i.e.
$h(x_\ast)>a_\ast$. The piecewise affine function $h(x)-h(x_\ast)$
is nonnegative for any $x$ in $\mathbb{R}^n$ therefore applying
Theorem \ref{abbasov_th1} we get the condition
\begin{equation}\label{abbasov_th2_eq1}
\left(C-(h(x_\ast),0)\right)\bigcap L_0^+\neq\emptyset \quad
\forall C\in \overline{E}.
\end{equation}
From the other side, for any element of the set $C_\ast\in
\overline{E}$ of the form $[a,0_n]$ the inequality
$$a-h(x_\ast)\leq a_\ast-h(x_\ast)<0$$ holds. Hence it is valid
the condition
$$\left(C_\ast-(h(x_\ast),0)\right)\bigcap L_0^+=\emptyset$$
which contradicts (\ref{abbasov_th2_eq1}).


\end{proof}

As we see to find a minimum of a piecewise affine function one
have to calculate $\displaystyle\max_{[a,0_n]\in C}a$ for any
$C\in\overline{E}$. This can be done via solving
linear-programming problem. Consider an arbitrary $C\in
\overline{E}_\ast$. Since it is convex polytope it can be
described in the form
$C=\operatorname{conv}\left\{[a_i,v_i]\,\middle|\, i\in
I_{C}\right\}$, where $I_{C_\ast}$ is a finite index set. Then the
solution of the following linear-programming problem

\begin{equation}\label{abbasov_linprog_problem}
\begin{cases}
\max \displaystyle\sum_{i\in I_{C}}\lambda_i a_i\quad s.t.\\
\displaystyle\sum_{i\in I_{C}}\lambda_i v_i=0_n\\
\displaystyle\sum_{i\in I_{C}}\lambda_i=1\\
\lambda_i\geq 0\quad i\in I_{C}
\end{cases}
\end{equation}
gives us needed value $\displaystyle\max_{[a,0_n]\in C}a$.

\section{Finding a global minimizer of a piecewise affine function} \label{abbasov_sec4}

Now we can distinguish sets from the family $\overline{E}$ that
determine the global minimizer.

\begin{lemma}\label{abbasov_lemma1}
Let $h\colon \mathbb{R}^n\to\mathbb{R}$ be a bounded below
piecewise affine function
$$h(x)=\min_{C\in{\overline{E}}}\max_{[a,v]\in{C}}[a+\langle
v,x\rangle],$$ where $\overline{E}$ is a finite family of convex
polytopes. Then for any global minimizer $x_\ast$ of the function
$h$ on $\mathbb{R}^n$ holds the condition
$$h(x_\ast)=\min_{C\in{\overline{E}}}\max_{[a,v]\in{C}}[a+\langle
v,x_\ast\rangle]=\min_{C\in{\overline{E}_\ast}}\max_{[a,v]\in{C}}[a+\langle
v,x_\ast\rangle],$$ where
$\displaystyle\overline{E}_\ast=\left\{C\in\overline{E}\,\middle|
\, \max_{[a,0_n]}a=a_\ast\right\}$ and $\displaystyle
a_\ast=\min_{C\in{\overline{E}}}\max_{[a,0_n]\in C}a$, i. e.
global minimum can be attained on sets $C\in \overline{E}_\ast$
only.
\end{lemma}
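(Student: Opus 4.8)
The plan is to lean on Theorem~\ref{abbasov_th2}, which already guarantees $h(x_\ast)=a_\ast$ for every global minimizer $x_\ast$, and then to show that any polytope $C^0\in\overline E$ realizing the outer minimum at $x_\ast$ necessarily belongs to $\overline E_\ast$. Since $\overline E_\ast\subseteq\overline E$, the inequality
$$\min_{C\in\overline E_\ast}\max_{[a,v]\in C}[a+\langle v,x_\ast\rangle]\ \geq\ \min_{C\in\overline E}\max_{[a,v]\in C}[a+\langle v,x_\ast\rangle]=h(x_\ast)$$
holds automatically (and the equality $h(x_\ast)=\min_{C\in\overline E}\max_{[a,v]\in C}[a+\langle v,x_\ast\rangle]$ is just the definition of $h$), so the whole content of the lemma is the reverse inequality, and that follows as soon as we exhibit one $C\in\overline E_\ast$ on which the inner maximum at $x_\ast$ equals $h(x_\ast)$.

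First I would fix a global minimizer $x_\ast$ and, using finiteness of $\overline E$, pick $C^0\in\overline E$ with $h(x_\ast)=\max_{[a,v]\in C^0}[a+\langle v,x_\ast\rangle]$; by Theorem~\ref{abbasov_th2} this common value equals $a_\ast$. Because $h$ is bounded below, Theorem~\ref{abbasov_th1} gives $C^0\cap L_0\neq\emptyset$, so $\widehat a:=\max_{[a,0_n]\in C^0}a$ is a well-defined real number, and $\widehat a\geq a_\ast$ holds directly from the definition $a_\ast=\min_{C\in\overline E}\max_{[a,0_n]\in C}a$. For the opposite inequality, observe that for every $[a,0_n]\in C^0$ one has $a=a+\langle 0_n,x_\ast\rangle\leq\max_{[a',v']\in C^0}[a'+\langle v',x_\ast\rangle]=a_\ast$, hence $\widehat a\leq a_\ast$. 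Therefore $\widehat a=a_\ast$, i.e. $C^0\in\overline E_\ast$, and consequently
$$\min_{C\in\overline E_\ast}\max_{[a,v]\in C}[a+\langle v,x_\ast\rangle]\leq\max_{[a,v]\in C^0}[a+\langle v,x_\ast\rangle]=h(x_\ast),$$
which together with the trivial reverse inequality proves the claim.

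The argument is essentially routine once Theorems~\ref{abbasov_th1} and~\ref{abbasov_th2} are in place; the only point needing a moment's thought is the observation that the polytope attaining the outer minimum at a global minimizer must already satisfy $\max_{[a,0_n]\in C}a=a_\ast$, which is exactly what lets the index set in the outer minimum be shrunk to $\overline E_\ast$ without changing the value. One could alternatively phrase the last step through $0_n\in\partial g(x_\ast)$ for the convex function $g(x)=\max_{[a,v]\in C^0}[a+\langle v,x\rangle]$, writing $0_n$ as a convex combination $\sum\lambda_i v_i$ of the vectors $v_i$ active at $x_\ast$ and checking that $\sum\lambda_i[a_i,v_i]\in C^0$ equals $[a_\ast,0_n]$, but the direct comparison above avoids invoking subdifferential calculus.
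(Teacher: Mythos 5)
Your proof is correct and is essentially the paper's argument seen from the other side: the paper shows that every $C\notin\overline{E}_\ast$ gives inner value strictly greater than $a_\ast=h(x_\ast)$, while you show that any $C^0$ attaining the outer minimum at $x_\ast$ must satisfy $\max_{[a,0_n]\in C^0}a=a_\ast$; both hinge on the same inequality $a\leq\max_{[a',v']\in C}[a'+\langle v',x_\ast\rangle]$ for $[a,0_n]\in C$ together with $h(x_\ast)=a_\ast$ from Theorem~\ref{abbasov_th2}. Your explicit appeal to Theorem~\ref{abbasov_th1} to justify that $\max_{[a,0_n]\in C^0}a$ is well defined is a nice touch the paper leaves implicit, but the route is the same.
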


\begin{proof}
Let the point $x_\ast$ be a global minimizer of the function $h$.
Choose an arbitrary $C\in\overline{E}$ such that
$C\notin\overline{E}_\ast$. The proof follows immediately from the
chain of inequalities
$$\max_{[a,v]\in{C}}[a+\langle v,x_\ast\rangle]\geq \max_{[a,0_n]\in{C}}[a+\langle v,x_\ast\rangle]=\max_{[a,0_n]\in{C}}a>a_\ast.$$
\end{proof}

Let us proceed to defining the set of global minimizers of the
function $h$. Consider an arbitrary $C_\ast\in \overline{E}_\ast$,
$C_\ast=\operatorname{conv}\left\{[a_i,v_i]\,\middle|\, i\in
I_{C_\ast}\right\}$, where $I_{C_\ast}$ is a finite index set.
Build the set $$X(C_\ast)=\left\{x\in\mathbb{R}^n\,\middle|\,
a_i+\langle v_i,x_\ast\rangle\leq a_\ast \right\}.$$ For any point
$x\in X(C_\ast)$ we have
$$a_\ast\leq h(x)=\min_{C\in{\overline{E}}}\max_{[a,v]\in{C}}[a+\langle
v,x_\ast\rangle]\leq \max_{[a,v]\in{C_\ast}}[a+\langle
v,x_\ast\rangle]\leq a_\ast,$$ whence $h(x)=a_\ast$, i.e. $x$ is a
global minimizer of the function $h$.

So $X=\displaystyle\bigcup_{C\in\overline{E}_\ast}X(C)$ is a set
of global minimizers of the function $h$.

Let us show that any global minimizer of the function $h$ belongs
to the set $X$. Indeed, assume that for some point
$\widehat{x}\in\mathbb{R}^n$ the equality $h(\widehat{x})=a_\ast$
is true. Then Lemma \ref{abbasov_lemma1} implies that there exists
$C_\ast\in\overline{E}_\ast$ such that
$$a_\ast=\min_{C\in\overline{E}}\max_{[a,v]\in C}[a+\langle
v,\widehat{x}\rangle]=\min_{C\in\overline{E}_\ast}\max_{[a,v]\in
C}[a+\langle v,\widehat{x}\rangle]=\max_{[a,v]\in
C_\ast}[a+\langle v,\widehat{x}\rangle],$$ which means that $x\in
X(C_\ast)$.

By these reasonings we proved the following result.

\begin{theorem}\label{abbasov_th3}
Let $h\colon \mathbb{R}^n\to\mathbb{R}$ be a bounded below
piecewise affine function
$$h(x)=\min_{C\in{\overline{E}}}\max_{[a,v]\in{C}}[a+\langle
v,x\rangle],$$ where $\overline{E}$ is a finite family of convex
polytopes. Then for a point point $x_\ast$ to be a global
minimizer of the function $h$ on $\mathbb{R}^n$ it is necessary
and sufficient that $x_\ast\in
X=\displaystyle\bigcup_{C\in\overline{E}_\ast}X(C)$.
\end{theorem}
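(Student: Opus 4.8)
The plan is to reduce the statement to the characterization of the minimum value obtained in Theorem~\ref{abbasov_th2} together with the localization provided by Lemma~\ref{abbasov_lemma1}, and then to exploit the elementary fact that an affine function attains its maximum over a polytope at a vertex. Since $h$ is bounded below, Theorem~\ref{abbasov_th2} gives $\min_{x\in\mathbb{R}^n}h(x)=a_\ast$, so $x_\ast$ is a global minimizer of $h$ if and only if $h(x_\ast)=a_\ast$. Thus it suffices to establish the equivalence $h(x_\ast)=a_\ast\iff x_\ast\in X$.

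For sufficiency I would take a point $x\in X(C_\ast)$ for some $C_\ast=\operatorname{conv}\{[a_i,v_i]\mid i\in I_{C_\ast}\}\in\overline{E}_\ast$. By the definition of $X(C_\ast)$, each vertex satisfies $a_i+\langle v_i,x\rangle\le a_\ast$; since the map $[a,v]\mapsto a+\langle v,x\rangle$ is affine, its maximum over the polytope $C_\ast$ is attained at a vertex, so $\max_{[a,v]\in C_\ast}[a+\langle v,x\rangle]\le a_\ast$. Hence $h(x)\le\max_{[a,v]\in C_\ast}[a+\langle v,x\rangle]\le a_\ast$, and together with $h(x)\ge a_\ast$ this forces $h(x)=a_\ast$. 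As $C_\ast$ ranges over $\overline{E}_\ast$, every point of $X$ is a global minimizer.

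For necessity I would start from an arbitrary point $\widehat{x}$ with $h(\widehat{x})=a_\ast$. Lemma~\ref{abbasov_lemma1} then yields $h(\widehat{x})=\min_{C\in\overline{E}_\ast}\max_{[a,v]\in C}[a+\langle v,\widehat{x}\rangle]$, so this minimum is realized by some $C_\ast\in\overline{E}_\ast$, i.e. $\max_{[a,v]\in C_\ast}[a+\langle v,\widehat{x}\rangle]=a_\ast$. Evaluating this inequality on the vertices $[a_i,v_i]$ of $C_\ast$ gives $a_i+\langle v_i,\widehat{x}\rangle\le a_\ast$ for every $i\in I_{C_\ast}$, which is precisely the membership $\widehat{x}\in X(C_\ast)\subseteq X$. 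Combining the two directions proves the theorem.

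The argument is essentially bookkeeping built on the two preceding results; the only steps that require a little care are making the vertex/polytope passage explicit in both directions (so that the finitely many inequalities defining $X(C_\ast)$ genuinely control the maximum over the whole polytope), and invoking Lemma~\ref{abbasov_lemma1} at exactly the right moment so that the witnessing set $C_\ast$ may be taken inside $\overline{E}_\ast$ rather than merely in $\overline{E}$. I do not expect any deeper obstacle.
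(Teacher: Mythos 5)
Your proposal is correct and follows essentially the same route as the paper: the sufficiency is the same chain $a_\ast\le h(x)\le\max_{[a,v]\in C_\ast}[a+\langle v,x\rangle]\le a_\ast$ (with the vertex argument for the last inequality), and the necessity invokes Lemma~\ref{abbasov_lemma1} to produce a witnessing $C_\ast\in\overline{E}_\ast$ with $\max_{[a,v]\in C_\ast}[a+\langle v,\widehat{x}\rangle]=a_\ast$, forcing the defining inequalities of $X(C_\ast)$. Your explicit appeal to Theorem~\ref{abbasov_th2} for $\min_x h(x)=a_\ast$ only makes transparent what the paper uses implicitly.
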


Theorem \ref{abbasov_th3} gives important corollares.

\begin{corollary}\label{abbasov_corollary1_th3}
Let for some $C_\ast\in\overline{E}_\ast$,
$C_\ast=\operatorname{conv}\left\{[a_i,v_i]\,\middle|\, i\in
I_{C_\ast}\right\}$ the inequalities $a_i\leq a_\ast$ are valid
for any $[a_i,v_i]\in I_{C_\ast}$. Then $0_n\in X$, i.e. $0_n$ is
a global minimizer of the function $h$ on $\mathbb{R}^n$.
\end{corollary}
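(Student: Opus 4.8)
The plan is to read off the claim directly from Theorem \ref{abbasov_th3}: to conclude that $0_n$ is a global minimizer it suffices to show $0_n\in X=\bigcup_{C\in\overline{E}_\ast}X(C)$, and the hypothesis is tailored to deliver $0_n\in X(C_\ast)$ for the particular set $C_\ast$ it names.

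First I would unwind the definition of $X(C_\ast)$. By construction $x\in X(C_\ast)$ exactly when $a_i+\langle v_i,x\rangle\leq a_\ast$ for every vertex index $i\in I_{C_\ast}$; since an affine function attains its maximum over a polytope at a vertex, this is the same as $\max_{[a,v]\in C_\ast}[a+\langle v,x\rangle]\leq a_\ast$. Now I would simply evaluate this membership condition at $x=0_n$: it reduces to the finitely many scalar inequalities $a_i\leq a_\ast$ for all $i\in I_{C_\ast}$, which is precisely the assumption of the corollary. Hence $0_n\in X(C_\ast)$.

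Second, because $C_\ast\in\overline{E}_\ast$ by hypothesis, we have $X(C_\ast)\subseteq X$, so $0_n\in X$. Applying Theorem \ref{abbasov_th3} with the candidate point $x_\ast=0_n$ then yields that $0_n$ is a global minimizer of $h$ on $\mathbb{R}^n$ (and in particular $h(0_n)=a_\ast$), which is the assertion.

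I do not expect a genuine obstacle here: the corollary is a one-line specialization of Theorem \ref{abbasov_th3}, obtained by substituting the origin into the membership test for $X(C_\ast)$. The only point that deserves an explicit sentence is the reduction of membership in $X(C_\ast)$ to the vertex inequalities $a_i\leq a_\ast$, so that the hypothesis literally certifies $0_n\in X(C_\ast)$; but this is immediate from the stated definition of $X(C_\ast)$ and needs no extra work.
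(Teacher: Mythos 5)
Your proof is correct and amounts to the same argument as the paper's: where you certify $0_n\in X(C_\ast)$ from the vertex inequalities and then invoke Theorem \ref{abbasov_th3}, the paper simply re-runs the underlying sandwich $a_\ast\leq h(0_n)\leq\max_{[a,v]\in C_\ast}a\leq a_\ast$ directly to get $h(0_n)=a_\ast$. The two routes carry identical content, so no further comment is needed.
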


\begin{proof}
Since
$$a_\ast\leq h(0_n)=\min_{C\in{\overline{E}}}\max_{[a,v]\in{C}}a\leq \max_{[a,v]\in{C_\ast}}a\leq a_\ast,$$
we have $h(0_n)=a_\ast$. Hence $0_n$ is a global minimizer of $h$.
\end{proof}

\begin{corollary}\label{abbasov_corollary2_th3}
The function $h$ has a unique global minimizer $x_\ast$ iff the
set \mbox{$X=\{x_\ast\}$} is a singleton, i.e. for any
$C\in\overline{E}_\ast$ where
$C=\operatorname{conv}\left\{[a_i,v_i]\,\middle|\, i\in
I_{C}\right\}$, the system of inequalities
$$a_i+\langle v_i,x\rangle\leq a_\ast \quad i\in I_{C}.$$
is trivial in the sense that the point $x_\ast$ is its only
solution.
\end{corollary}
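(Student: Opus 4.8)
The statement is, in essence, a restatement of Theorem~\ref{abbasov_th3}, so the plan is to reduce both implications to that theorem and to identify the set $X$ explicitly. First I would recall from (the proof of) Theorem~\ref{abbasov_th3} that the set of all global minimizers of $h$ equals $X=\bigcup_{C\in\overline{E}_\ast}X(C)$, where for $C=\operatorname{conv}\{[a_i,v_i]\mid i\in I_C\}$ the set $X(C)=\{x\in\mathbb{R}^n\mid a_i+\langle v_i,x\rangle\le a_\ast,\ i\in I_C\}$ is \emph{exactly} the solution set of the linear system in the statement. Granting this, the equivalence ``$h$ has a unique global minimizer'' $\iff$ ``$X$ is a singleton $\{x_\ast\}$'' is immediate: if $X=\{x_\ast\}$ then $x_\ast$ is the only global minimizer, and conversely if $x_\ast$ is the unique global minimizer then $X$, being by Theorem~\ref{abbasov_th3} the set of all global minimizers, equals $\{x_\ast\}$.

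The only point requiring a short argument is to pass from ``$X=\{x_\ast\}$'' to the assertion that \emph{each} system (one for every $C\in\overline{E}_\ast$) has $x_\ast$ as its unique solution; for this it suffices to show that every $X(C)$ with $C\in\overline{E}_\ast$ is nonempty, since then $X(C)\subseteq X=\{x_\ast\}$ forces $X(C)=\{x_\ast\}$. I would get nonemptiness from linear programming duality: the quantity $\displaystyle\inf_{x\in\mathbb{R}^n}\max_{[a,v]\in C}[a+\langle v,x\rangle]$ is the optimal value of the linear program ``minimize $t$ subject to $t\ge a_i+\langle v_i,x\rangle$, $i\in I_C$'', whose dual is precisely problem~(\ref{abbasov_linprog_problem}) with optimal value $\max_{[a,0_n]\in C}a$. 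Since $h$ is bounded below, Theorem~\ref{abbasov_th1} gives $C\cap L_0\neq\emptyset$, i.e. the dual is feasible, so by strong duality the primal value equals $\max_{[a,0_n]\in C}a$, which is $a_\ast$ for $C\in\overline{E}_\ast$; the primal minimum is attained at some $(t,x)$ with $t=a_\ast$, and that $x$ satisfies $a_i+\langle v_i,x\rangle\le a_\ast$ for all $i\in I_C$, hence $x\in X(C)$ and $X(C)\neq\emptyset$.

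Assembling the pieces: if $x_\ast$ is the unique global minimizer, then for every $C\in\overline{E}_\ast$ the nonempty set $X(C)$ lies in $X=\{x_\ast\}$, hence $X(C)=\{x_\ast\}$, which is exactly the claim that the system for $C$ has $x_\ast$ as its only solution; conversely, if every such system has $x_\ast$ as its unique solution, then $X=\bigcup_{C\in\overline{E}_\ast}X(C)=\{x_\ast\}$, and Theorem~\ref{abbasov_th3} yields that $x_\ast$ is the unique global minimizer. The main obstacle is precisely the nonemptiness of each $X(C)$ for $C\in\overline{E}_\ast$ (everything else is bookkeeping over Theorem~\ref{abbasov_th3}); I expect the cleanest route is the LP-duality identification of $\inf_x\max_{[a,v]\in C}[a+\langle v,x\rangle]$ with $\max_{[a,0_n]\in C}a=a_\ast$ described above, though one could instead argue directly, via a Farkas-type feasibility argument from $C\cap L_0\neq\emptyset$, that the polyhedron $\{x\mid a_i+\langle v_i,x\rangle\le a_\ast\}$ is nonempty.
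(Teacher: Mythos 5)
Your proof is correct, and it actually does more than the paper, which states this corollary without any proof, treating it as an immediate consequence of Theorem~\ref{abbasov_th3}. The reduction you make is the same one the paper intends: since Theorem~\ref{abbasov_th3} identifies the set of global minimizers with $X=\bigcup_{C\in\overline{E}_\ast}X(C)$, and each $X(C)$ is exactly the solution set of the corresponding system $a_i+\langle v_i,x\rangle\leq a_\ast$, $i\in I_C$, uniqueness of the minimizer is equivalent to $X$ being a singleton. Where you go beyond the paper is in noticing that the further rephrasing ``every system has $x_\ast$ as its \emph{only} solution'' requires each $X(C)$, $C\in\overline{E}_\ast$, to be nonempty (otherwise $X$ could be a singleton while some system is infeasible rather than trivial), and in closing this gap via LP duality: the program $\min\{t\mid a_i+\langle v_i,x\rangle\leq t,\ i\in I_C\}$ has dual (\ref{abbasov_linprog_problem}), which is feasible by Theorem~\ref{abbasov_th1}, so the primal value equals $\max_{[a,0_n]\in C}a=a_\ast$ and is attained, giving $X(C)\neq\emptyset$. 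This is a genuine and worthwhile supplement: the paper's statement implicitly assumes this nonemptiness, and your duality argument (or the Farkas-type alternative you mention) is exactly the kind of justification that makes the ``i.e.'' in the corollary rigorous.
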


If our aim is to find one arbitrary global minimizer of $h$, we
can consider any $C_\ast\in\overline{E}_\ast$,
$C_\ast=\operatorname{conv}\left\{[a_i,v_i]\,\middle|\, i\in
I_{C_\ast}\right\}$ and choose at least one point satisfying the
system of inequalities
$$ a_i+\langle v_i,x\rangle\leq a_\ast\quad
i\in I_{C_\ast}.
$$
This problem can be reduced to  LP (liner programming) problem or
to unconstrained minimization of the following continuously
differentiable function
\begin{equation}\label{abbasov_funct_glob_min}
\sum_{i\in I_{C_\ast}} \left(\max\{0,a_i-a_\ast+\langle
v_i,x\rangle\}\right)^2,
\end{equation}
which gradient obviously equals
$$\sum_{i\in I_{C_\ast}} 2v_i\max\{0,a_i-a_\ast+\langle
v_i,x\rangle\}.$$

Consider some examples to demonstrate obtained results. 

\begin{example}\label{abbasov_exmpl2}
Let the function $h_2\colon\mathbb{R}\to\mathbb{R}$ is given by
$$h_2(x)=\min\left\{\max\left\{-9+2x,9-4x\right\},\max\left\{4+x,-2+\frac{1}{2}x\right\}\right\}.$$
The family $\overline{E}=\{C_1,C_2\}$, where
$$C_1=\operatorname{conv}\left\{\begin{pmatrix}-9 \\ \phantom{-}2\end{pmatrix},\begin{pmatrix}\phantom{-}9 \\ -4\end{pmatrix}\right\},
\quad C_2=\operatorname{conv}\left\{\begin{pmatrix}4 \\
1\end{pmatrix},\begin{pmatrix}-2 \\
-\frac{1}{2}\end{pmatrix}\right\},$$ is an upper coexhauster of
the function $h_2$. Since $C_i\bigcap L_0\neq\emptyset$ for all
$i=1,2$ (see Fig. \ref{abbasov_ex2_epsimage} a) condition
(\ref{abbasov_th1_condition}) is fulfilled here. Thus $h_2$ is
bounded below (see Fig. \ref{abbasov_ex2_epsimage} b).

\begin{figure}[H]
\begin{minipage}[h]{0.44\linewidth}
\center{\includegraphics[width=0.65\linewidth]{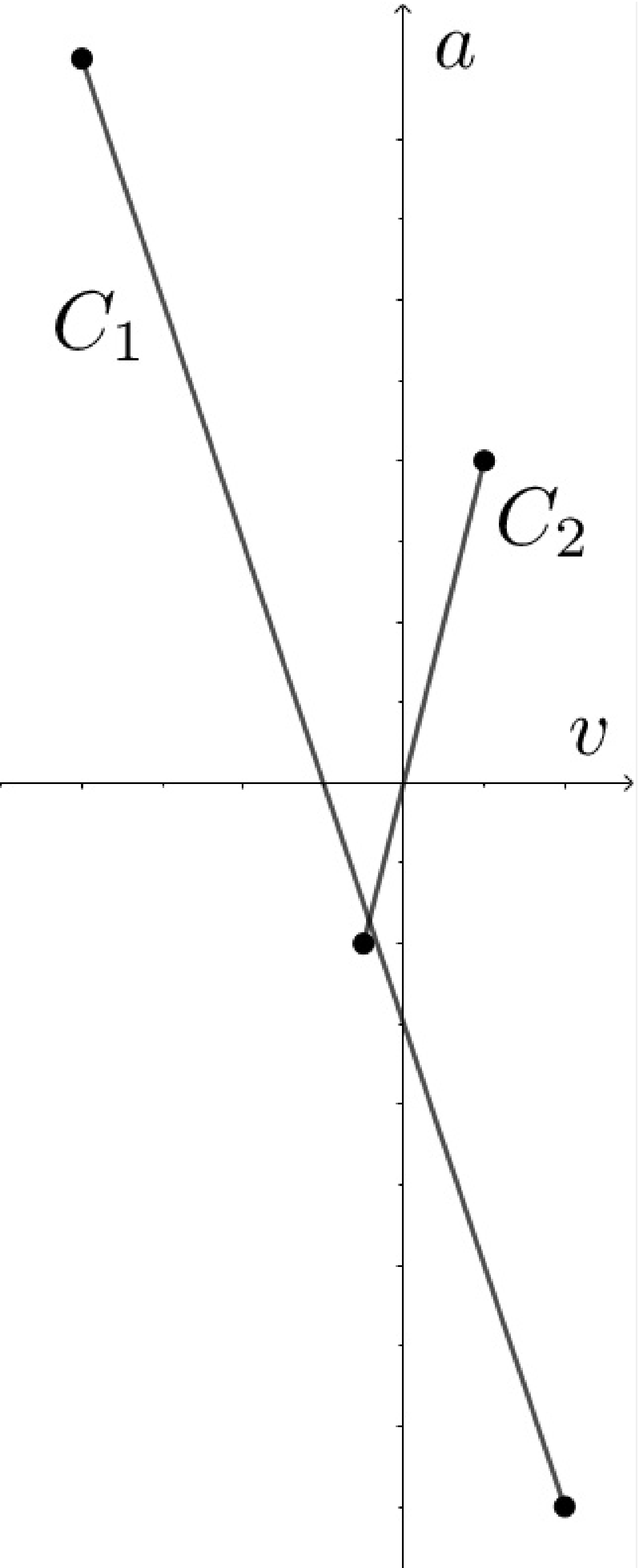} \\
a)}
\end{minipage}
\hfill
\begin{minipage}[h]{0.54\linewidth}
\center{\includegraphics[width=0.975\linewidth]{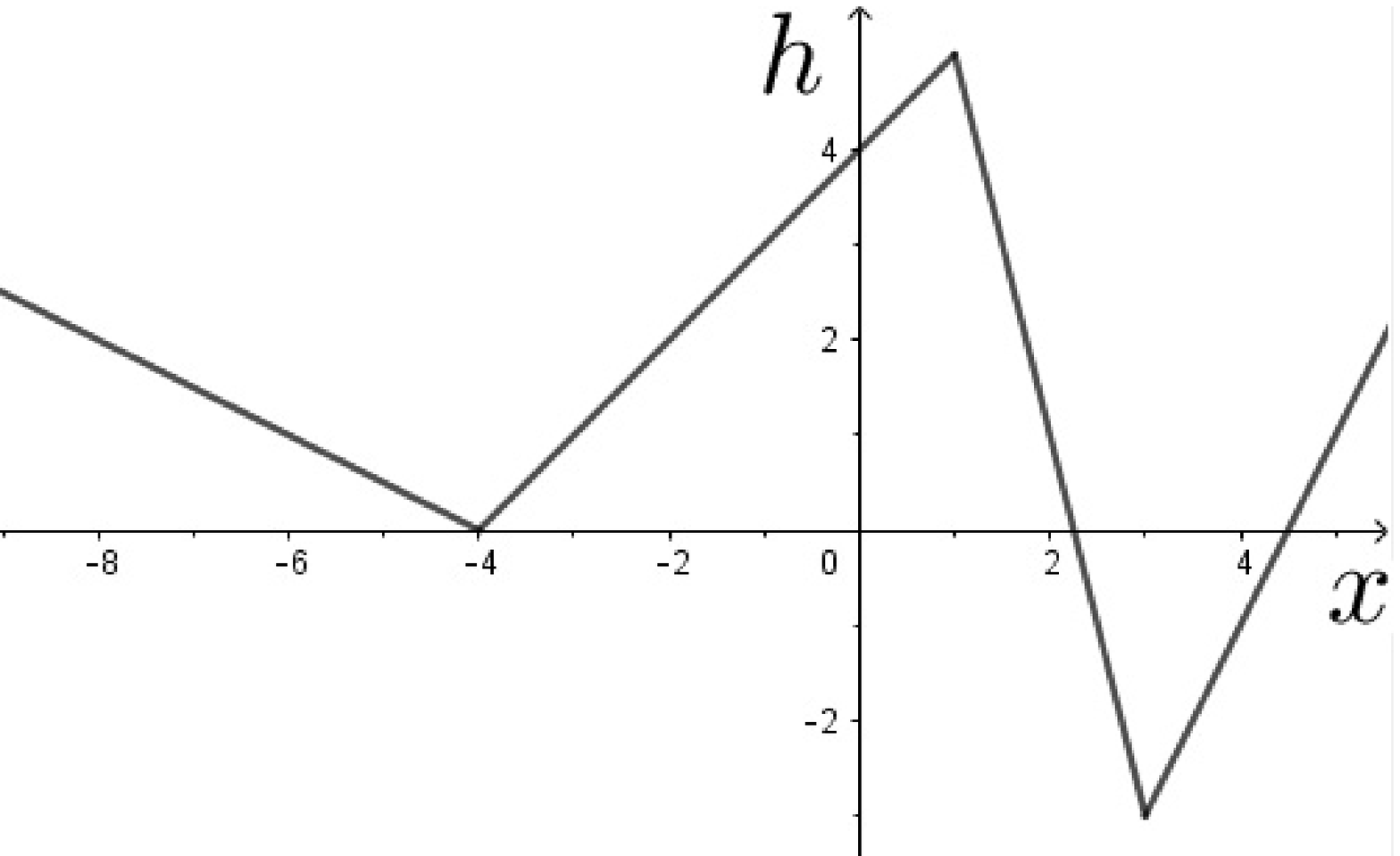} \\
b)}
\end{minipage}
\caption{The upper coexhauster and the graph of the function
$h_2$.} \label{abbasov_ex2_epsimage}
\end{figure}

It is obvious that $a_\ast=-3$ and $\overline{E}_\ast=\{C_1\}$
here. To find a set of all global minimizers we have to solve
system of inequalities
$$
\begin{cases}
-9+2x\leq -3\\
\phantom{-}9-4x\leq -3
\end{cases}
$$
which is trivial since point $x_\ast=3$ is the only solution of
the system. Hence the conditions of Corollary
\ref{abbasov_corollary2_th3} are satisfied and the function $h_2$
has only one global minimizer $x_\ast$.

We could find the unique global minimizer of the function $h_2$
via the following linear programming problem
$$
\begin{cases}
\min y \quad s.t.\\
-9+2x\leq y\\
\phantom{-}9-4x\leq y
\end{cases}
$$
which has solution $(x_\ast,y_\ast)=(3,-3)$. Or we could construct
and minimize the function (\ref{abbasov_funct_glob_min})
$$\left(\max\{0,12-4x\}\right)^2+\left(\max\{0,-6+2x\}\right)^2.$$
The gradient of this function
$$-8\max\{0,12-4x\}+4\max\{0,-6+2x\}$$
equals zero at the point $x_\ast$.

%

\end{example}

\begin{example}[see \cite{Dolgopolik2019}]\label{abbasov_exmpl3}
Consider the function $h_3\colon\mathbb{R}^2\to\mathbb{R}$ which
is given by
\begin{equation*}
\begin{split}
h_3(x)=\min\{\max\{-x_1,&x_1,-x_2,x_2\},\\&\max\{2x_1-3,-2x_1+5,x_2-1,-x_2+3\}\}.
\end{split}
\end{equation*}
The family $\overline{E}=\{C_1,C_2\}$, where
$$C_1=\operatorname{conv}\left\{\begin{pmatrix}\phantom{-}0 \\ -1\\\phantom{-}0\end{pmatrix},
\begin{pmatrix}0 \\ 1\\0\end{pmatrix},
\begin{pmatrix}\phantom{-}0 \\ \phantom{-}0\\-1\end{pmatrix},
\begin{pmatrix}0 \\ 0\\1\end{pmatrix}\right\},$$
$$C_2=\operatorname{conv}\left\{\begin{pmatrix}-3 \\ \phantom{-}2\\\phantom{-}0\end{pmatrix},
\begin{pmatrix}\phantom{-}5 \\ -2\\\phantom{-}0\end{pmatrix},
\begin{pmatrix}-1 \\ \phantom{-}0\\\phantom{-}1\end{pmatrix},
\begin{pmatrix}\phantom{-}3 \\ \phantom{-}1\\-1\end{pmatrix}\right\}.$$
is an upper coexhauster of the function $h_3$. Omitting
calculations for the problems (\ref{abbasov_linprog_problem}) for
each of sets $C_1$ and $C_2$, we write final results
$$\overline{E}_\ast=\{C_1\}, \quad a_\ast=0.$$
As we see the condition from Corollary
\ref{abbasov_corollary1_th3}
$$a\leq 0 \quad \forall [a,v]\in C_1$$
is fulfilled here, whence $x_\ast=0_n$ is a global minimizer of
$h_3$. Moreover, since the system of inequalities
$$\begin{cases}
-x_1\leq 0\\
\phantom{-}x_1\leq 0\\
-x_2\leq 0\\
\phantom{-}x_2\leq 0\\
\end{cases}
$$
is trivial this point is unique global minimizer of $h_3$, i.e.
$X=\{x_\ast\}$.

\end{example}

\begin{example}\label{abbasov_exmpl4}
Let the function $h_4\colon\mathbb{R}\to\mathbb{R}$ is given by
$$h_2(x)=\min\left\{\max\left\{-4-x,0,2+x\right\},\max\left\{2-2x,0,-9+3x\right\}\right\}.$$
The family $\overline{E}=\{C_1,C_2\}$, where
$$C_1=\operatorname{conv}\left\{\begin{pmatrix}-4 \\
-1\end{pmatrix},\begin{pmatrix}0 \\ 0\end{pmatrix},
\begin{pmatrix}2 \\ 1\end{pmatrix}\right\},
\quad C_2=\operatorname{conv}\left\{
\begin{pmatrix}\phantom{-}2 \\ -2\end{pmatrix},
\begin{pmatrix}0 \\ 0\end{pmatrix},
\begin{pmatrix}-9 \\ \phantom{-}3\end{pmatrix}\right\}$$ is an upper coexhauster of
the function $h_4$.

As we see from Fig. \ref{abbasov_ex4_epsimage} a) the function
$h_4$ is lower bounded $a_\ast=0$ and
$\overline{E}_\ast=\{C_1,C_2\}$. The set $X(C_1)$ is defined by
the system of inequalities
$$\begin{cases}
-4-x\leq 0\\
2+x\leq 0
\end{cases}
$$
while the system
$$\begin{cases}
2-2x\leq 0\\
-9+3x\leq 0
\end{cases}
$$
defines the set $X(C_2)$. Consequently $$X=X(C_1)\bigcup
X(C_2)=\left\{x\in \mathbb{R}\mid -4\leq x\leq
-2\right\}\bigcup\left\{x\in \mathbb{R}\mid 1\leq x\leq
3\right\}.$$ This corresponds to what we see on Fig.
\ref{abbasov_ex4_epsimage} b.

\begin{figure}[H]
\begin{minipage}[h]{0.34\linewidth}
\center{\includegraphics[width=0.75\linewidth]{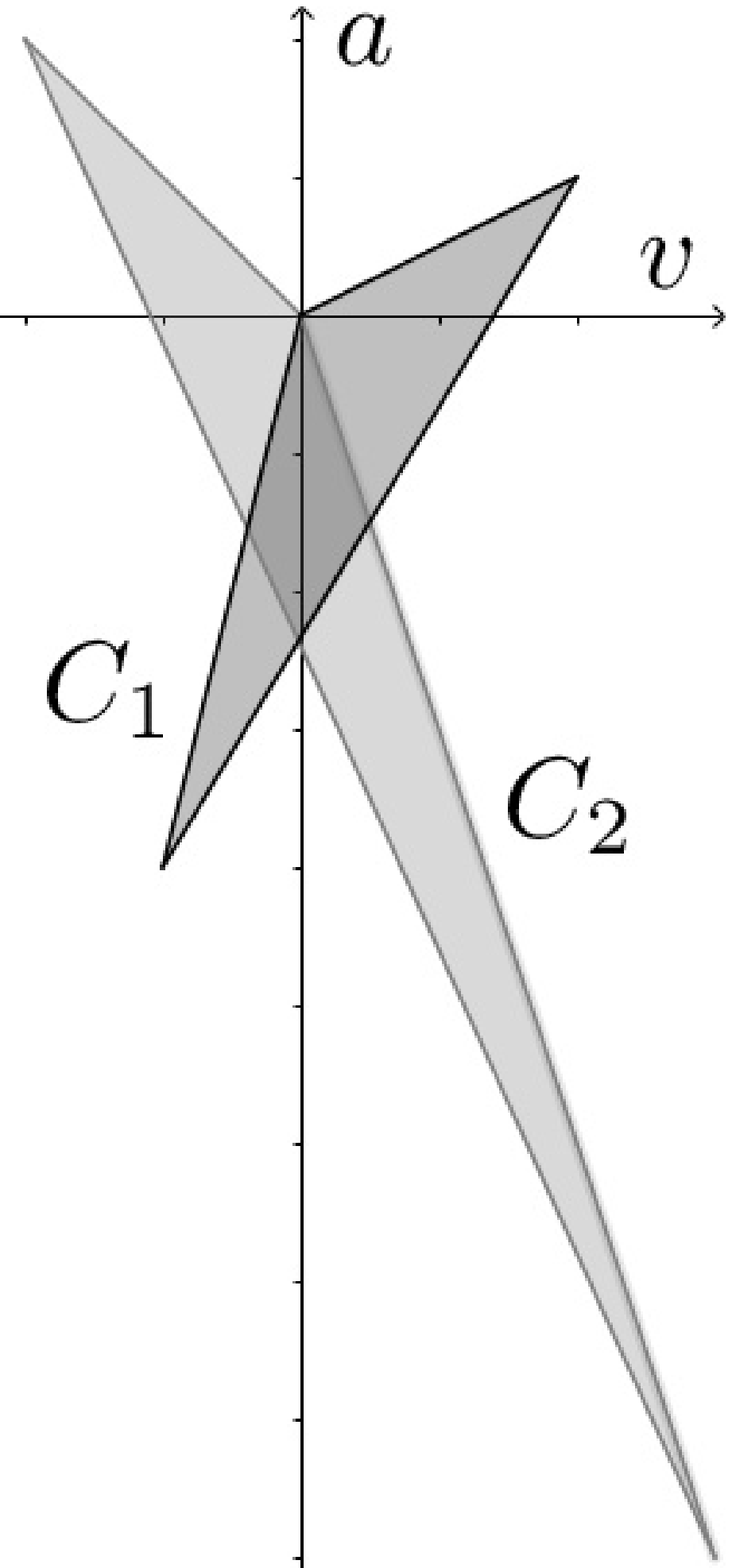} \\
a)}
\end{minipage}
\hfill
\begin{minipage}[h]{0.64\linewidth}
\center{\includegraphics[width=0.975\linewidth]{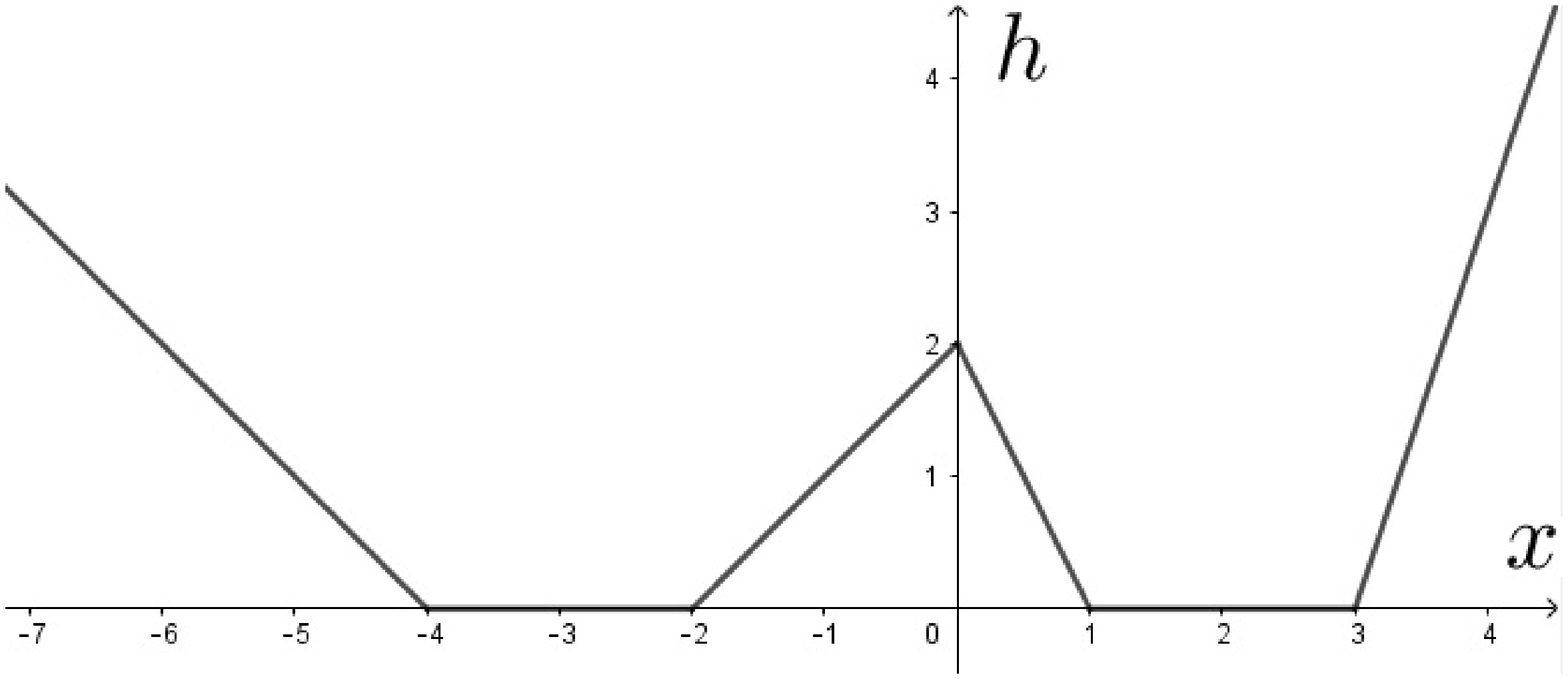} \\
b)}
\end{minipage}
\caption{The upper coexhauster and the graph of the function
$h_4$.} \label{abbasov_ex4_epsimage}
\end{figure}

\end{example}

\begin{example}\label{abbasov_exmpl5}
Consider the function $h_5\colon\mathbb{R}^{10^6}\to\mathbb{R}$
which is given by
$$h_5(x)=\min\left\{\max\left\{1+2w,-1-2w\right\},\max\left\{3,5+2w,5-2w\right\}\right\},$$
where $w=\mathbf{1}=(1,1,\dots,1)\in\mathbb{R}^{10^6}$.

The family $\overline{E}=\{C_1,C_2\}$, where
$$C_1=\operatorname{conv}\left\{\begin{pmatrix}1 \\
2w\end{pmatrix},\begin{pmatrix}-1 \\ -2w\end{pmatrix}\right\},
\quad C_2=\operatorname{conv}\left\{
\begin{pmatrix}3 \\ \mathbf{0}\end{pmatrix},
\begin{pmatrix}5 \\ 2w\end{pmatrix},
\begin{pmatrix}\phantom{-}5 \\ -2w\end{pmatrix}\right\}$$ is an upper coexhauster of
the function $h_5$.

Theorem \ref{abbasov_th1} implies that the function $h_5$ is
bounded below. It is obvious that
$$\max_{[a,0_n]\in C_1}a=0, \quad \max_{[a,0_n]\in C_2}a=5.$$
Hence $a_\ast=0$, $\overline{E}_\ast=\{C_1\}$. Consequently
$X(C_1)$ is given by the system of inequalities
$$\begin{cases}
\phantom{-}1+\langle 2w,x\rangle\leq 0\\
-1-\langle 2w,x\rangle\leq 0
\end{cases}
$$
whence $X=\left\{x\in\mathbb{R}^{10^6}\mid \langle
w,x\rangle=-\frac{1}{2} \right\}$. Based on this result we can
take the point $x=-\frac{10^{-6}}{2}w$ as a global minimizer.
\end{example}

\section*{Conclusion}
Obtained results allow one to get the minimum value of a piecewise
affine function. In comparison with the algorithm in
\cite{Dolgopolik2019} this can be done by solving $m$ linear
programming problems (\ref{abbasov_linprog_problem}) instead of
solving $m$ quadratic programming problems, where $m$ is a number
of sets in an upper coexhauster of the studied function. Note that
for a set $C\in\overline{E}$ where
$C=\operatorname{conv}\left\{[a_i,v_i]\,\middle|\, i\in
I_{C}\right\}$, the problem of this kind is
$\left|I_{C}\right|$-dimensional. So the minimum value can be
obtained easily even for high-dimensional studied function, in
cases when $\left|I_{C}\right|$ for any $C\in\overline{E}$ are
small (see Example \ref{abbasov_exmpl5}).

We can construct the set of all global minimizers of a studied
piecewise affine function via its minimum value. In cases when an
arbitrary global minimizer is needed we additionally solve linear
programming problem or the unconstrained optimization problem for
the function (\ref{abbasov_funct_glob_min}).

Proposed approach is quite simple and can be applied in different
branches of mathematics and various applications.

\section*{Acknowledgements}
Results in Section 4 were obtained in the Institute for Problems
in Mechanical Engineering of the Russian Academy of Sciences with
the support of Russian Science Foundation (RSF), project No.
20-71-10032.


%
%



\end{document}